\newtheorem{thm}{Theorem}
\newtheorem{lemma}[thm]{Lemma}
\theoremstyle{definition}
\DeclareMathOperator{\rank}{rank}
\newsavebox{\fmbox}
\newcommand{\reals}{\ensuremath{\mathbb{R}}}
\newcommand{\naturals}{\ensuremath{\mathbb{N}}}
\newlength{\dyindent}
\newenvironment{dy*}{\begin{list}{}%
{\setlength{\leftmargin}{\dyindent}\setlength{\labelwidth}{\dyindent}%
\addtolength{\labelwidth}{-\labelsep}}%
\item}%
{\end{list}}
\newcommand{\mr}{\mbox{mr}}
\newcommand{\I}{\mathcal{I}}
\newcommand{\pin}{\mbox{pin}}
\title{On the inertia set of a signed graph with loops.}
\author{Marina Arav, Hein van der Holst\footnote{Corresponding author, E-mail: hvanderholst@gsu.edu}, John Sinkovic\\ 
Department of Mathematics and Statistics \\
Georgia State University \\
Atlanta, GA 30303, USA
}
\date{}
\begin{document}
\maketitle

\begin{abstract}
A signed graph is a pair $(G,\Sigma)$, where $G=(V,E)$ is a graph (in which parallel edges and loops are permitted) with $V=\{1,\ldots,n\}$ and $\Sigma\subseteq E$.  The edges in $\Sigma$ are called odd edges and the other edges of $E$ even. By $S(G,\Sigma)$ we denote the set of all symmetric $n\times n$ real matrices $A=[a_{i,j}]$ such that if $a_{i,j} < 0$, then there must be an even edge connecting $i$ and $j$; if $a_{i,j} > 0$, then there must be an odd edge connecting $i$ and $j$; and if $a_{i,j} = 0$, then either there must be an odd edge and an even edge connecting $i$ and $j$, or there are no edges connecting $i$ and $j$.  
(Here we allow $i=j$.)
For a symmetric real matrix $A$, the partial inertia of $A$ is the pair $(p,q)$, where $p$ and $q$ are the number of positive and negative eigenvalues of $A$, respectively. If $(G,\Sigma)$ is a signed graph, we define the \emph{inertia set} of $(G,\Sigma)$ as the set of the partial inertias of all matrices $A \in S(G,\Sigma)$.

In this paper, we present a formula that allows us to obtain the minimal elements of the inertia set of $(G,\Sigma)$ in case $(G,\Sigma)$ has a $1$-separation using the inertia sets of certain signed graphs associated to the $1$-separation.
\end{abstract}

\section*{Introduction}
A signed graph is a pair $(G,\Sigma)$, where $G=(V,E)$ is a graph (in which parallel edges and loops are permitted) with $V=\{1,\ldots,n\}$ and $\Sigma\subseteq E$.  The edges in $\Sigma$ are called odd edges and the other edges of $E$ even edges. By $S(G,\Sigma)$ we denote the set of all symmetric $n\times n$ real matrices $A=[a_{i,j}]$ such that 
\begin{itemize}
\item if $a_{i,j} < 0$, then there must be an even edge connecting $i$ and $j$,
\item if $a_{i,j} > 0$, then there must be an odd edge connecting $i$ and $j$, and
\item if $a_{i,j} = 0$, then either there must be an odd edge and an even edge connecting $i$ and $j$, or there are no edges connecting $i$ and $j$.  
\end{itemize}
Here we allow $i=j$, in which case loops might occur at vertex $i$.
For example the matrix
\begin{equation*}
A = \begin{bmatrix}
0 & 1 & 0 \\
1 & 0 & -2\\
0 & -2 & -3
\end{bmatrix}
\end{equation*}
belongs to $S(G,\Sigma)$, where $(G,\Sigma)$ is the signed graph shown in Figure~\ref{fig:example}. 
\begin{figure}
\begin{center}
\includegraphics[width=0.5\textwidth]{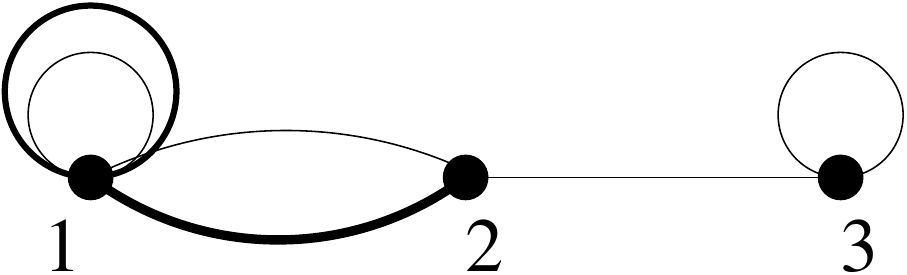}
\end{center}
\caption{Thick edges denote odd edges, while thin edges denote even edges.}\label{fig:example}
\end{figure}
For a symmetric real matrix $A$, the partial inertia of $A$, denoted by $\pin(A)$, is the pair $(p,q)$, where $p$ and $q$ are the number of positive and negative eigenvalues of $A$, respectively. If $(G,\Sigma)$ is a signed graph, we define the \emph{inertia set} of $(G,\Sigma)$ as the set $\{\pin(A)\mid A\in S(G,\Sigma)\}$; we denote the inertia set of $(G,\Sigma)$ by $\I(G,\Sigma)$. 

A \emph{separation} of a graph $G=(V,E)$ is a pair $(G_1,G_2)$ of subgraphs of $G$ such that $G_1\cup G_2 = G$ and $E(G_1)\cap E(G_2) = \emptyset$; its \emph{order} is the cardinality of $V(G_1)\cap V(G_2)$. If the order of a separation is $k$, we also say that $(G_1,G_2)$ is a $k$-separation. The notions of separations transfer without change to signed graphs.

Before presenting the formula, we need introduce some other notation. Let $\mathcal{S},\mathcal{R} \subseteq \naturals^2$; in this paper we include $0$ in the set $\naturals$. If for each $(p,q) \in \mathcal{S}$, there exists an $(r,s)\in \mathcal{R}$ such that $r\leq p$ and $s\leq q$, then we write $\mathcal{S}\leq \mathcal{R}$. If $\mathcal{S}\leq \mathcal{R}$ and $\mathcal{R}\leq \mathcal{S}$, then we write $\mathcal{R}\cong \mathcal{S}$. Call a pair $(p,q)\in \mathcal{S}$ \emph{minimal} if there is no pair $(r,s) \in \mathcal{S}$ with $r\leq p$, $s\leq q$ and $(p,q)\not=(r,s)$. Then $\mathcal{R}\cong \mathcal{S}$ if and only if every minimal pair in $\mathcal{R}$ also belongs to $\mathcal{S}$ and every minimal pair in $\mathcal{S}$ also belongs to $\mathcal{R}$. By $\mathcal{R}+\mathcal{S}$ we denote the set $\{(p_1+p_2,q_1+q_2)~\mid~(p_1,q_1)\in \mathcal{R}, (p_2,q_2)\in \mathcal{S}\}$.


Let $[(G_1,\Sigma_1), (G_2,\Sigma)]$ be a $1$-separation of a signed graph $(G,\Sigma)$ and let $v$ be the vertex in $V(G_1)\cap V(G_2)$.  For $i=1,2$, let $(G_i,\Sigma_i)_E$ and $(G_i,\Sigma_i)_O$ be the signed graphs obtained from $(G_i, \Sigma_i)$ by adding at $v$ an even loop and an odd loop, respectively. For $i=1,2$, let $(G_i,\Sigma_i)-v$ be the signed graph obtained from $(G_i,\Sigma)$ by deleting vertex $v$.
In this paper, we prove that the following formula holds:
\begin{equation}\label{mainformula}
\begin{split}
\I(G,\Sigma)\cong & [\I((G_1,\Sigma_1)-v)+ \I((G_2,\Sigma_2)-v)+ \{(1,1)\}]\\
 & \cup [\I(G_1,\Sigma_1) + \I(G_2,\Sigma_2)]\\
& \cup [\I(G_1,\Sigma_1)_E +\I(G_2,\Sigma_2)_O]\\
& \cup [\I(G_1,\Sigma_1)_O +\I(G_2,\Sigma_2)_E].
\end{split}
\end{equation}

If $(G_1,G_2)$ is a $1$-separation of a graph $G$, then 
Formula~(\ref{mainformula}) is analogous to the formula for the inertia set of $1$-sums of graphs. This formula was found by Barrett, Hall, and Loewy \cite{MR2547901}. For a graph $G=(V,E)$ with $V=\{1,\ldots,n\}$ (in which no parallel edges and loops are permitted), $S(G)$ demotes the set of all symmetric $n\times n$ real matrices $A=[a_{i,j}]$ such that $a_{i,j}\not=0$, $i\not=j$ if and only if $i$ and $j$ are adjacent. The inertia set of $G$ is $\{\pin(A)~\mid~A\in S(G)\}$ and is denoted by $\I(G)$. If $(G_1,G_2)$ is a $1$-separation of the graph $G$ and $\{v\} = V(G_1)\cap V(G_2)$, then $G$ is also called a $1$-sum of $G_1$ and $G_2$ at $v$. If $\mathcal{R}\subseteq \naturals^2$ and $n \in \naturals$, then $[\mathcal{R}]_n$ denotes the subset of $\mathcal{R}$ consisting of all pairs $(p,q)$ with $p+q\leq n$. Barrett, Hall, and Loewy proved that $\I(G) = [\I(G_1-v) + \I(G_2) + \{(1,1)\}]_n\cup [\I(G_1)+\I(G_2)]_n$. 

An important lemma about the inertia set in the case of graphs is the Northeast Lemma. This lemma says that if $G$ has order $n$ and $(p,q)\in \I(G)$ satisfies $p+q<n$, then also $(r,s) \in \I(G)$ for any $(r,s)$ satisfying $(p,q)\leq (r,s)$ and $r+s\leq n$. We note that, contrary to the graphical case, in the case of signed graphs in which parallel edges and loops are permitted, the Northeast lemma does not always hold. For example, if $(G,\emptyset)$ is the signed graph with exactly one vertex and no edges, then $(0,0)\in \I(G,\emptyset)$, while $(1,0),(0,1)\not\in \I(G,\emptyset)$.

The minimum rank of a graph $G$ is $\min\{\rank(A)~\mid~A\in S(G)\}$ and is denoted by $\mr(G)$. The inertia set of a graph $G$ includes the minimum rank of $G$, since $\mr(G) = \min\{p+q~\mid~(p,q) \in \I(G)\}$. Mikkelson~\cite{mikkelson2008} gives a formula for the minimum rank of $1$-sums of graphs that allow loops. In this case, a diagonal entry of a matrix corresponding to the graphs is zero or nonzero as to whether there is no loop or a loop at the corresponding vertex. See Fallat and Hogben \cite{FalHog2007} for a survey on the minimum ranks of graphs. 

In this paper, we allow matrices to have zero rows or zero columns. For example, if $A$ is $0\times n$ and $B$ is $n\times m$, then $AB$ is $0\times m$. If $A$ is $0\times n$, then each vector $x\in \mathbb{R}^n$ belongs to $\ker(A)$. If $k=0$, then $I_k$ denotes the $0\times 0$ matrix, while if $k>0$, then $I_k$ denotes the $k\times k$ identity matrix.

\section{Arrows on symmetric matrices}

If $A$ and $B$ are symmetric real matrices, we write 
\begin{equation*}
A\to B
\end{equation*}
if there exists a real matrix $P$ such that $P^T A P=B$.  It is clear that if $A\to B$ and $B\to C$, then $A\to C$. If for two symmetric real matrices $A$ and $B$, $A\to B$ and $B\to A$, then we write
\begin{equation*}
A\leftrightarrow B.
\end{equation*}

The following lemma is a variant of Sylvester's Law of Inertia (see, for example, Theorem 20.3 in \cite{Dym2007}).
\begin{lemma}\label{lem:diagonal}
Let $A$ and $B$ be symmetric real matrices. If $A\rightarrow B$ and $A$ has $p$ positive and $q$ negative eigenvalues, then $B$ has at most $p$ positive and at most $q$ negative eigenvalues. If $A\leftrightarrow B$, then $A$ and $B$ have the same number of positive and the same number of negative eigenvalues.
\end{lemma}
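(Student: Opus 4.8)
The plan is to characterize the number of positive eigenvalues of a symmetric real matrix variationally and then push quadratic forms through $P$. Recall (an immediate consequence of the spectral theorem) that for a symmetric real matrix $M$ the number of positive eigenvalues equals the largest dimension of a subspace $U$ such that $x^T M x > 0$ for every nonzero $x \in U$; I will call such a $U$ a \emph{positive subspace} for $M$. Likewise, the number of negative eigenvalues of $M$ is the largest dimension of a positive subspace for $-M$.

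First I would handle the implication coming from $A \to B$. Write $P^T A P = B$ and suppose $B$ has $p'$ positive eigenvalues, witnessed by a positive subspace $U$ for $B$ with $\dim U = p'$. For nonzero $y \in U$ one computes $(Py)^T A (Py) = y^T (P^T A P) y = y^T B y > 0$; in particular $Py \ne 0$, so $P$ is injective on $U$ and $W := \{ Py \mid y \in U \}$ has dimension $p'$. Every nonzero $w \in W$ equals $Py$ for a unique (and nonzero) $y \in U$, so $w^T A w = y^T B y > 0$; hence $W$ is a positive subspace for $A$, and therefore $A$ has at least $p'$ positive eigenvalues, i.e. $p' \le p$. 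Since $P^T (-A) P = -B$, the same argument with $A, B$ replaced by $-A, -B$ shows that $B$ has at most $q$ negative eigenvalues. This establishes the first statement.

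The second statement then follows by symmetry: if $A \leftrightarrow B$ then both $A \to B$ and $B \to A$ hold, and applying the first statement in both directions yields that $A$ and $B$ have the same number of positive and the same number of negative eigenvalues. The only point requiring care — and the one place where this departs from the classical Sylvester law — is that $P$ is not assumed to be square or invertible, so one cannot simply transport an eigenbasis; it is precisely the strict positivity of the quadratic form on $U$ that forces $P|_U$ to be injective and so preserves the dimension count. I expect verifying that injectivity (and the attendant bookkeeping with possibly non-square $P$, including the degenerate cases where a matrix is $0 \times n$) to be the only mildly delicate part of the argument.
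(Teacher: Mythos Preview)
Your argument is correct: the variational characterization of the positive and negative eigenvalue counts, together with the observation that strict positivity of $y^T B y$ on $U$ forces $P|_U$ to be injective, handles precisely the case where $P$ need not be square or invertible, and the second statement follows immediately by applying the first in both directions.

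As for comparison with the paper: the paper does not actually prove this lemma. It is stated as ``a variant of Sylvester's Law of Inertia'' with a citation to a textbook, and no argument is given. So your write-up supplies more than the paper does. Your emphasis on the injectivity of $P$ on positive subspaces is exactly the point that distinguishes this from the classical (invertible) Sylvester law, and it is worth keeping that remark, since the paper explicitly allows $0 \times n$ matrices and other degenerate shapes.
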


If $\mathcal{A}$ and $\mathcal{B}$ are sets of symmetric real matrices, we write 
\begin{equation*}
\mathcal{A}\to\mathcal{B}
\end{equation*}
if for every matrix $A\in \mathcal{A}$, there exists a matrix $B\in \mathcal{B}$ such that $A\to B$. 
If $\mathcal{A}$ consists of a single matrix $A$, then for $\mathcal{A}\to \mathcal{B}$ we also write $A\to \mathcal{B}$.
We write
\begin{equation*}
\mathcal{A}\leftrightarrow \mathcal{B}
\end{equation*}
if $\mathcal{A}\to\mathcal{B}$ and $\mathcal{B}\to\mathcal{A}$. 

For a set of symmetric $n\times n$ real matrices $\mathcal{A}$, we denote by $\I(\mathcal{A})$ the set $\{\pin(A)~\mid~A\in \mathcal{A}\}$. So $\I(G,\Sigma) = \I(S(G,\Sigma))$.

\begin{lemma}
Let $\mathcal{A}$ and $\mathcal{B}$ be sets of symmetric real matrices. If $\mathcal{A}\to \mathcal{B}$, then $\I(\mathcal{A})\leq \I(\mathcal{B})$. Consequently, if $\mathcal{A}\leftrightarrow \mathcal{B}$, then $\I(\mathcal{A})\cong\I(\mathcal{B})$.
\end{lemma}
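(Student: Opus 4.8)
The plan is to unwind the definitions of $\to$ on sets and of $\I(\cdot)$, and then invoke Lemma~\ref{lem:diagonal} on individual matrices. First I would prove the first assertion, that $\mathcal{A}\to\mathcal{B}$ implies $\I(\mathcal{A})\leq\I(\mathcal{B})$. By the definition of $\leq$ on subsets of $\naturals^2$, it suffices to take an arbitrary $(p,q)\in\I(\mathcal{A})$ and produce some $(r,s)\in\I(\mathcal{B})$ with $r\leq p$ and $s\leq q$. Since $(p,q)\in\I(\mathcal{A})=\I(S)$ for $S=\mathcal{A}$, there is a matrix $A\in\mathcal{A}$ with $\pin(A)=(p,q)$; that is, $A$ has exactly $p$ positive and $q$ negative eigenvalues.

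Next, because $\mathcal{A}\to\mathcal{B}$, the definition of $\to$ on sets of symmetric matrices guarantees a matrix $B\in\mathcal{B}$ with $A\to B$. Now I apply Lemma~\ref{lem:diagonal}: since $A\to B$ and $A$ has $p$ positive and $q$ negative eigenvalues, $B$ has at most $p$ positive and at most $q$ negative eigenvalues. Writing $\pin(B)=(r,s)$, this says precisely $r\leq p$ and $s\leq q$. Since $B\in\mathcal{B}$, we have $(r,s)\in\I(\mathcal{B})$, which is exactly the witness required. As $(p,q)$ was arbitrary, $\I(\mathcal{A})\leq\I(\mathcal{B})$.

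For the ``consequently'' clause, suppose $\mathcal{A}\leftrightarrow\mathcal{B}$. By definition this means $\mathcal{A}\to\mathcal{B}$ and $\mathcal{B}\to\mathcal{A}$. Applying the first assertion in each direction gives $\I(\mathcal{A})\leq\I(\mathcal{B})$ and $\I(\mathcal{B})\leq\I(\mathcal{A})$, which is exactly the definition of $\I(\mathcal{A})\cong\I(\mathcal{B})$, completing the proof.

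I do not expect a genuine obstacle here: the statement is essentially a bookkeeping consequence of Lemma~\ref{lem:diagonal} together with the set-level definitions of $\to$, $\leftrightarrow$, $\leq$, and $\cong$. The only point requiring a little care is keeping the quantifiers straight --- the relation $\mathcal{A}\to\mathcal{B}$ is ``for every $A$ there exists $B$,'' and $\I(\mathcal{A})\leq\I(\mathcal{B})$ is ``for every element of $\I(\mathcal{A})$ there exists a dominated element of $\I(\mathcal{B})$,'' so the proof is simply a matter of threading one existential through the other via the eigenvalue inequality supplied by Lemma~\ref{lem:diagonal}.
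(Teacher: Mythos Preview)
Your proposal is correct and follows essentially the same approach as the paper: pick $(p,q)\in\I(\mathcal{A})$, choose $A\in\mathcal{A}$ with $\pin(A)=(p,q)$, use $\mathcal{A}\to\mathcal{B}$ to get $B\in\mathcal{B}$ with $A\to B$, and apply Lemma~\ref{lem:diagonal}. The only difference is that you spell out the ``consequently'' clause explicitly, whereas the paper leaves it implicit.
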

\begin{proof}
Let $(p,q)\in \I(\mathcal{A})$. Then there exists a matrix $A\in \mathcal{A}$ with $p$ positive and $q$ negative eigenvalues. Since $\mathcal{A}\to \mathcal{B}$, there exists a matrix $B\in \mathcal{B}$ such that $A\to B$. By Lemma~\ref{lem:diagonal}, $B$ has at most $p$ positive and at most $q$ negative eigenvalues. Hence there exists a pair $(r,s) \in \I(\mathcal{B})$ with $(r,s)\leq (p,q)$. Thus, $\I(\mathcal{A})\leq \I(\mathcal{B})$.
\end{proof}

\section{Inequalities}
Let $0\leq k\leq m,n$ and let 
\begin{equation*}
A = \begin{bmatrix}
A_{1,1} & A_{1,2}\\
A_{2,1} & A_{2,2}
\end{bmatrix}\text{ and }
B = \begin{bmatrix}
B_{1,1} & B_{1,2}\\
B_{2,1} & B_{2,2} 
\end{bmatrix}
\end{equation*}
be symmetric $m\times m$ and $n\times n$ real matrices, respectively, where $A_{2,2}$ and $B_{1,1}$ are $k\times k$.
Then the $k$-subdirect sum of $A$ and $B$ (see \cite{FalJoh1999a}), which is denoted by $A\oplus_k B$, is the matrix
\begin{equation*}
A\oplus_k B = \begin{bmatrix}
A_{1,1} & A_{1,2} & 0\\
A_{2,1} & A_{2,2} + B_{1,1} & B_{1,2}\\
0 & B_{2,1} & B_{2,2}
\end{bmatrix}.
\end{equation*}
Let $0\leq k\leq\min(m,n)$. If $\mathcal{A}$ and $\mathcal{B}$ are sets of symmetric $m\times m$ and of symmetric $n\times n$ real matrices, respectively, then the $k$-subdirect sum of $\mathcal{A}$ and $\mathcal{B}$, denoted by $\mathcal{A}\oplus_k\mathcal{B}$ is the set of matrices
\begin{equation*}
\mathcal{A}\oplus_k\mathcal{B} = \{A\oplus_k B~\mid~A\in \mathcal{A}, B\in \mathcal{B}\}.
\end{equation*}
If $k=0$, we write $\mathcal{A}\oplus \mathcal{B}$ for $\mathcal{A}\oplus_k \mathcal{B}$, and if the set $\mathcal{B}$ consists of a single symmetric real matrix $B$, then we also write $\mathcal{A}\oplus B$ for $\mathcal{A}\oplus \mathcal{B}$.

Throughout the paper we denote by $H$ the matrix $\begin{bmatrix}
0 & 1\\
1 & 0
\end{bmatrix}$.

To prove Formula~\ref{mainformula}, it therefore suffices to prove that 
\begin{equation}\label{arrowformula}
\begin{split}
S(G,\Sigma)\leftrightarrow & [S((G_1,\Sigma_1)-v)\oplus S((G_2,\Sigma_2)-v)\oplus H]\\
 & \cup [S(G_1,\Sigma_1) \oplus S(G_2,\Sigma_2)]\\
& \cup [S(G_1,\Sigma_1)_E \oplus S(G_2,\Sigma_2)_O]\\
& \cup [S(G_1,\Sigma_1)_O \oplus S(G_2,\Sigma_2)_E].
\end{split}
\end{equation}

We now prove some lemmas and show one direction of the arrows. In the next section, we will finish the proof.

\begin{lemma}\label{lem:sumpin}
Let $0\leq k\leq \min(m,n)$. Let $A$ and $B$ be a symmetric $m\times m$ and a symmetric $n\times n$ real matrices, respectively. Then $A\oplus B\to A\oplus_k B$.
\end{lemma}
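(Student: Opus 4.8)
The plan is to exhibit an explicit real matrix $P$ of size $(m+n)\times(m+n-k)$ such that $P^T(A\oplus B)P = A\oplus_k B$; by definition of the arrow relation, this is exactly what $A\oplus B\to A\oplus_k B$ means. First I would partition $A$ and $B$ precisely as in the definition of the $k$-subdirect sum, so that $A\oplus B$ is the block-diagonal matrix whose diagonal super-blocks are $\begin{bmatrix}A_{1,1}&A_{1,2}\\A_{2,1}&A_{2,2}\end{bmatrix}$ and $\begin{bmatrix}B_{1,1}&B_{1,2}\\B_{2,1}&B_{2,2}\end{bmatrix}$, with $A_{2,2}$ and $B_{1,1}$ of size $k\times k$. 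Thus $A\oplus B$ has four natural row/column groups of sizes $m-k$, $k$, $k$, $n-k$.

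Next I would take $P$ to be the ``folding'' matrix
\[
P = \begin{bmatrix}
I_{m-k} & 0 & 0\\
0 & I_k & 0\\
0 & I_k & 0\\
0 & 0 & I_{n-k}
\end{bmatrix},
\]
which identifies a single middle group of $k$ coordinates with both the $A_{2,2}$-block and the $B_{1,1}$-block of $A\oplus B$. The computation then proceeds in two routine steps: forming $(A\oplus B)P$ collapses the two middle block-columns into one, and left-multiplying by $P^T$ collapses the two middle block-rows, with the net effect that the resulting $(2,2)$-block is $A_{2,2}+B_{1,1}$ while all other blocks are unchanged. This yields exactly
\[
P^T(A\oplus B)P = \begin{bmatrix}
A_{1,1} & A_{1,2} & 0\\
A_{2,1} & A_{2,2}+B_{1,1} & B_{1,2}\\
0 & B_{2,1} & B_{2,2}
\end{bmatrix} = A\oplus_k B .
\]

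Finally I would record the degenerate cases: when $k=0$ the matrix $P$ is the identity and the claim is trivial, and when $m=k$ or $n=k$ (so that $A_{1,1}$ or $B_{2,2}$ is an empty block) the identities above still hold by the conventions on matrices with zero rows or columns fixed in the introduction. I do not expect any genuine obstacle here; the lemma is a direct block calculation, and the only care required is the bookkeeping of the four block sizes and the empty-matrix conventions.
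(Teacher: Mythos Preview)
Your proposal is correct and is essentially identical to the paper's own proof: the same partition of $A$ and $B$ and the same folding matrix $P$ are used, yielding $P^T(A\oplus B)P = A\oplus_k B$. Your added remarks on the degenerate cases $k=0$ and $m=k$ or $n=k$ are a harmless elaboration not present in the paper.
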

\begin{proof}
We may write
\begin{equation*}
A = \begin{bmatrix}
A_{1,1} & A_{1,2}\\
A_{2,1} & A_{2,2}
\end{bmatrix}
\end{equation*}
and
\begin{equation*}
B = \begin{bmatrix}
B_{2,2} & B_{2,3}\\
B_{3,2} & B_{3,3}
\end{bmatrix},
\end{equation*}
where $A_{2,2}$ and $B_{2,2}$ are $k\times k$ matrices.
Let
\begin{equation*}
P = \begin{bmatrix}
I_{m-k} & 0  & 0\\
0 & I_k  & 0\\
0 & I_k  & 0\\
0 & 0  & I_{n-k}
\end{bmatrix}.
\end{equation*}
Then
\begin{equation*}
P^T \begin{bmatrix}
A & 0\\
0 & B
\end{bmatrix} P = A\oplus_k B.
\end{equation*}
\end{proof}

From the previous lemma, we immediately obtain the following lemma.
\begin{lemma}\label{lem:1sepformulaonedirection}
Let $\mathcal{A}_1$ and $\mathcal{A}_2$ be sets of symmetric $m\times m$ real matrices and symmetric $n\times n$ real matrices, respectively. Then
\begin{equation*}
\mathcal{A}_1\oplus\mathcal{A}_2\to
\mathcal{A}_1\oplus_1\mathcal{A}_2
\end{equation*}
\end{lemma}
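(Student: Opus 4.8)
The plan is to derive this directly from Lemma~\ref{lem:sumpin} by unwinding the definition of the arrow relation on sets of matrices. Recall that $\mathcal{A}_1\oplus\mathcal{A}_2\to\mathcal{A}_1\oplus_1\mathcal{A}_2$ means precisely that for every matrix $C\in\mathcal{A}_1\oplus\mathcal{A}_2$ there is a matrix $D\in\mathcal{A}_1\oplus_1\mathcal{A}_2$ with $C\to D$. So I would begin by fixing an arbitrary $C\in\mathcal{A}_1\oplus\mathcal{A}_2$.

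By the definition of the $0$-subdirect sum, $C=A\oplus B$ for some $A\in\mathcal{A}_1$ and $B\in\mathcal{A}_2$, where $A$ is $m\times m$ and $B$ is $n\times n$. Since the $1$-subdirect sum $\mathcal{A}_1\oplus_1\mathcal{A}_2$ is the object under consideration, we have $1\le\min(m,n)$, so Lemma~\ref{lem:sumpin} applies with $k=1$ and yields $A\oplus B\to A\oplus_1 B$, that is, $C\to A\oplus_1 B$. As $A\in\mathcal{A}_1$ and $B\in\mathcal{A}_2$, the matrix $A\oplus_1 B$ belongs to $\mathcal{A}_1\oplus_1\mathcal{A}_2$, so we may take $D=A\oplus_1 B$. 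Since $C$ was arbitrary, this establishes $\mathcal{A}_1\oplus\mathcal{A}_2\to\mathcal{A}_1\oplus_1\mathcal{A}_2$.

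There is essentially no obstacle here: the one point requiring minor care is the bookkeeping with sizes, namely that $k=1$ is admissible in Lemma~\ref{lem:sumpin}, which is automatic as soon as the $1$-subdirect sum is defined; the remaining step is the purely formal translation of the set-level arrow into the matrix-level arrow. All the real content already resides in the explicit congruence matrix $P$ exhibited in the proof of Lemma~\ref{lem:sumpin}.
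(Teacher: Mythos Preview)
Your proof is correct and matches the paper's approach exactly: the paper simply states that the lemma follows immediately from Lemma~\ref{lem:sumpin}, and what you have written is precisely the routine unwinding of that implication at the level of sets of matrices.
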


\begin{lemma}\label{lem:vertexdel}
Let $n>0$ and let $A = \begin{bmatrix}
A_{1,1} & A_{1,2}\\
A_{2,1} & a_{2,2}
\end{bmatrix}$ be a symmetric $n\times n$ real matrix, where $a_{2,2}$ is a scalar. Then $A_{1,1}\oplus H\rightarrow A\rightarrow A_{1,1}$.
\end{lemma}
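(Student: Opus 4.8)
The plan is to prove the two arrows $A_{1,1}\oplus H\to A$ and $A\to A_{1,1}$ separately, each by exhibiting an explicit rectangular matrix witnessing the relation.

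The arrow $A\to A_{1,1}$ is the easy one: take $Q=\begin{bmatrix} I_{n-1}\\ 0\end{bmatrix}$, an $n\times(n-1)$ matrix, and observe by a one-line block computation that $Q^TAQ=A_{1,1}$. This just says that deleting the last row and column is realized by this $Q$, and it stays valid when $n=1$ (then $Q$ is $1\times 0$ and $A_{1,1}$ is the $0\times 0$ matrix) under the conventions on zero-dimensional matrices fixed in the introduction.

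For the arrow $A_{1,1}\oplus H\to A$, write $b:=A_{1,2}$, an $(n-1)\times 1$ column, so that $A_{1,1}\oplus H=\begin{bmatrix} A_{1,1} & 0 & 0\\ 0 & 0 & 1\\ 0 & 1 & 0\end{bmatrix}$ is $(n+1)\times(n+1)$. I would then take the $(n+1)\times n$ matrix $P=\begin{bmatrix} I_{n-1} & 0\\ 0 & 1\\ b^T & \tfrac12 a_{2,2}\end{bmatrix}$ and verify $P^T(A_{1,1}\oplus H)P=A$ by block multiplication. The motivation for this choice of $P$ is at the level of quadratic forms: the form of $A_{1,1}\oplus H$ at $(x,s,t)$ is $x^TA_{1,1}x+2st$, and the substitution $x=y$, $s=r$, $t=b^Ty+\tfrac12 a_{2,2}r$ turns it into $y^TA_{1,1}y+2rb^Ty+a_{2,2}r^2$, which is exactly the form of $A$ at $(y,r)$; $P$ is simply the matrix of that substitution.

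I do not expect a genuine obstacle. The only points needing care are the bookkeeping of block sizes, so that $P$ is really $(n+1)\times n$ and all products are conformable, and the degenerate case $n=1$, where the $I_{n-1}$ and $b^T$ blocks vanish and $P$ collapses to the column $\begin{bmatrix} 1\\ \tfrac12 a_{2,2}\end{bmatrix}$; one then checks directly that $P^THP=[a_{2,2}]=A$. Putting the two arrows together yields the displayed chain $A_{1,1}\oplus H\to A\to A_{1,1}$, which is the claim.
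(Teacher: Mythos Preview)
Your proof is correct and essentially identical to the paper's: both arrows are witnessed by exactly the same matrices $P=\begin{bmatrix} I_{n-1} & 0\\ 0 & 1\\ A_{2,1} & a_{2,2}/2\end{bmatrix}$ and $Q=\begin{bmatrix} I_{n-1}\\ 0\end{bmatrix}$ (your $b^T$ is the paper's $A_{2,1}$ since $A$ is symmetric). Your added remarks on the quadratic-form motivation and the $n=1$ boundary case are a nice supplement but not part of the paper's proof.
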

\begin{proof}
To see that $A_{1,1}\oplus H\rightarrow A$, let
\begin{equation*}
P = \begin{bmatrix}
I_{n-1} & 0\\
0 & 1\\
A_{2,1} & a_{2,2}/2
\end{bmatrix}.
\end{equation*}
Then
\begin{equation*}
P^T
\begin{bmatrix}
A_{1,1} & 0 & 0\\
0 & 0 & 1\\
0 & 1 & 0
\end{bmatrix}P = A.
\end{equation*}

To see that $A\rightarrow A_{1,1}$, let
\begin{equation*}
P = \begin{bmatrix}
I_{n-1}\\
0
\end{bmatrix}.
\end{equation*}
Then $P^T A P = A_{1,1}$.
\end{proof}


If $A$ is a symmetric $n\times n$ matrix and $1\leq j\leq n$, then $A(j)$ denotes the principal submatrix of $A$ obtained by removing the $j$th row and column in $A$.
If $\mathcal{A}$ is a set of symmetric $n\times n$ matrices and $1\leq j\leq n$, then $\mathcal{A}(j)$ denotes the set $\{A(j)~\mid~A\in \mathcal{A}\}$.

From Lemma~\ref{lem:sumpin} and the previous lemma, we obtain the following lemma.
\begin{lemma}\label{lem:1sepformulaonedirectiondeleted}
Let $m$ and $n$ be positive integers, and let $\mathcal{A}_1$ and $\mathcal{A}_2$ be sets of symmetric $m\times m$ real matrices and symmetric $n\times n$ real matrices, respectively. Then
\begin{equation*}
\mathcal{A}_1(m)\oplus \mathcal{A}_2(1)\oplus H\to
\mathcal{A}_1\oplus_1\mathcal{A}_2
\end{equation*}
\end{lemma}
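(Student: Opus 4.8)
The plan is to combine the two preceding lemmas. Lemma~\ref{lem:1sepformulaonedirection} already gives us $\mathcal{A}_1\oplus\mathcal{A}_2\to \mathcal{A}_1\oplus_1\mathcal{A}_2$, so it would suffice to produce an arrow $\mathcal{A}_1(m)\oplus\mathcal{A}_2(1)\oplus H\to \mathcal{A}_1\oplus\mathcal{A}_2$. But that is not quite what Lemma~\ref{lem:vertexdel} gives on the nose, so I would instead work matrix by matrix and splice the two reductions together. Fix $A_1\in\mathcal{A}_1$ and $A_2\in\mathcal{A}_2$, and write $A_1=\begin{bmatrix}A_{1,1}&A_{1,2}\\A_{2,1}&a_{2,2}\end{bmatrix}$ with $a_{2,2}$ the $(m,m)$-entry, so $A_1(m)=A_{1,1}$, and similarly $A_2=\begin{bmatrix}b_{1,1}&B_{1,2}\\B_{2,1}&B_{2,2}\end{bmatrix}$ with $b_{1,1}$ the $(1,1)$-entry, so $A_2(1)=B_{2,2}$.

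The key step is to realize $A_1\oplus_1 A_2$ as the image of $A_1(m)\oplus A_2(1)\oplus H$ under a single congruence. By Lemma~\ref{lem:vertexdel} applied to $A_1$, there is a $P_1$ with $P_1^T\bigl(A_{1,1}\oplus H\bigr)P_1 = A_1$; the structure of the argument there in fact shows we may take $P_1$ of the block form that fixes the $A_{1,1}$-block and maps the extra $H$-block into the last coordinate of $A_1$. Applying Lemma~\ref{lem:vertexdel} to $A_2$ (after conjugating by the permutation that moves the first coordinate to the last) gives a $P_2$ with $P_2^T\bigl(H\oplus B_{2,2}\bigr)P_2 = A_2$ in the analogous block form. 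Now take the block-diagonal matrix $\mathrm{diag}(P_1,P_2)$ acting on $A_{1,1}\oplus H\oplus H\oplus B_{2,2}$; this produces $A_1\oplus A_2$. Finally merge the two interior $H$-summands into one: the two $H$-blocks sit in the coordinates that the $1$-subdirect sum identifies, and Lemma~\ref{lem:1sepformulaonedirection} (or a direct $2\times 2$ computation, since $H\oplus_? H$ under the identification is again congruent to $H$ together with the merged entry absorbed into the neighbouring blocks) collapses $A_1(m)\oplus H\oplus H\oplus A_2(1)$ down to $A_1(m)\oplus H\oplus A_2(1)$. Tracking the arrows: $\mathcal{A}_1(m)\oplus \mathcal{A}_2(1)\oplus H\leftrightarrow \mathcal{A}_1(m)\oplus H\oplus H\oplus \mathcal{A}_2(1)\to \mathcal{A}_1\oplus\mathcal{A}_2\to \mathcal{A}_1\oplus_1\mathcal{A}_2$, where the first step is a reindexing plus Lemma~\ref{lem:1sepformulaonedirection}, the second is $\mathrm{diag}(P_1,P_2)$, and the last is Lemma~\ref{lem:1sepformulaonedirection}.

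The main obstacle I anticipate is bookkeeping of which coordinate plays the role of the ``cut vertex'' at each stage: in $\mathcal{A}_1\oplus_1\mathcal{A}_2$ the identified coordinate is the $m$th of the first block and the $1$st of the second, so when I expand $A_1$ via Lemma~\ref{lem:vertexdel} I must be sure the freshly-introduced $H$-block is glued at exactly that coordinate, and likewise for $A_2$ at its first coordinate. Once the indices are aligned, the two $H$'s land side by side in precisely the two coordinates that $\oplus_1$ will merge, and the merge of the off-diagonal $H$-blocks with the adjacent $A_{1,1}$ and $B_{2,2}$ blocks is exactly the content of the congruence $P$ exhibited in the proof of Lemma~\ref{lem:vertexdel}. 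Since every arrow used is among sets of matrices of a fixed size, and none of the constructions depends on the particular $A_1, A_2$, the arrow between the sets follows. A short alternative, if the index-chasing gets heavy, is simply to observe that the composite map is again a single explicit $P$ (a product of the two block maps with a permutation) and verify $P^T\bigl(A_1(m)\oplus A_2(1)\oplus H\bigr)P = A_1\oplus_1 A_2$ by a direct block multiplication, which is routine once $P$ is written down.
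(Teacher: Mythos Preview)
Your main chain has a genuine gap at the first step. You claim
\[
\mathcal{A}_1(m)\oplus \mathcal{A}_2(1)\oplus H \;\leftrightarrow\; \mathcal{A}_1(m)\oplus H\oplus H\oplus \mathcal{A}_2(1),
\]
but the forward direction is false: $H$ has partial inertia $(1,1)$ while $H\oplus H$ has partial inertia $(2,2)$, so by Lemma~\ref{lem:diagonal} there is no arrow $H\to H\oplus H$. Concretely, take $\mathcal{A}_1=\{0_{m\times m}\}$ and $\mathcal{A}_2=\{0_{n\times n}\}$; then the left side is $\{0\oplus 0\oplus H\}$ with inertia $(1,1)$, while every matrix on the right side has inertia at least $(2,2)$. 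The manipulation you sketch, $H\oplus H\to H\oplus_1 H\leftrightarrow H$, only gives the \emph{collapse} direction $H\oplus H\to H$, which is the wrong way for your chain. So the route through $\mathcal{A}_1\oplus\mathcal{A}_2$ via two separate applications of Lemma~\ref{lem:vertexdel} cannot be initiated from a single $H$.

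The fix, and what the paper does, is much shorter: apply Lemma~\ref{lem:vertexdel} \emph{once}, directly to the target matrix $A_1\oplus_1 A_2$, deleting the cut-vertex row and column. Because the ``1,3'' and ``3,1'' blocks of $A_1\oplus_1 A_2$ are zero, the resulting principal submatrix is exactly the block-diagonal matrix $A_1(m)\oplus A_2(1)$, and Lemma~\ref{lem:vertexdel} gives
\[
\bigl(A_1(m)\oplus A_2(1)\bigr)\oplus H \;\to\; A_1\oplus_1 A_2
\]
in one stroke. Your closing ``short alternative'' of writing down a single explicit $P$ is exactly this: the matrix $P$ from the proof of Lemma~\ref{lem:vertexdel}, with $A_{1,1}$ there replaced by $A_1(m)\oplus A_2(1)$ and the last row filled in by $\begin{bmatrix}A_{2,1} & (a_{2,2}+b_{1,1})/2 & B_{1,2}\end{bmatrix}$ (after the obvious permutation), does the job. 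So your fallback is correct; it is only the detour through two copies of $H$ that fails.
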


\begin{lemma}
Let $(G,\Sigma)$ be a signed graph which has a $1$-separation $[(G_1,\Sigma_1),(G_2,\Sigma_2)]$. Let $v$ be the vertex in $V(G_1)\cap V(G_2)$. Then each of the following holds:
\begin{enumerate}[(i)]
\item $S((G_1,\Sigma_1)-v)\oplus S((G_2,\Sigma_2)-v)\oplus H\to S(G,\Sigma)$, 
\item $S(G_1,\Sigma_1)\oplus S(G_2,\Sigma_2)\to S(G,\Sigma)$,
\item $S(G_1,\Sigma_1)_E\oplus S(G_2,\Sigma_2)_O\to S(G,\Sigma)$, and
\item $S(G_1,\Sigma_1)_O\oplus S(G_2,\Sigma_2)_E\to S(G,\Sigma)$.
\end{enumerate}
\end{lemma}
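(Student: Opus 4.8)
The plan is to derive all four statements from Lemma~\ref{lem:1sepformulaonedirection} and Lemma~\ref{lem:1sepformulaonedirectiondeleted} together with one structural observation: if we fix vertex orderings in which $v$ is the last vertex of $G_1$ and the first vertex of $G_2$, then for any $A_1\in S(G_1,\Sigma_1)$ and $A_2\in S(G_2,\Sigma_2)$ the $1$-subdirect sum $A_1\oplus_1 A_2$ is a matrix indexed by $V(G)=V(G_1)\cup V(G_2)$ that lies in $S(G,\Sigma)$. Most of the verification is immediate: entries inside $V(G_1)\setminus\{v\}$ come from $A_1$ and see only edges of $G_1-v$, those inside $V(G_2)\setminus\{v\}$ come from $A_2$, the $v$-to-$(V(G_i)\setminus\{v\})$ entries see only edges of $G_i$ at $v$, and the block between $V(G_1)\setminus\{v\}$ and $V(G_2)\setminus\{v\}$ is zero, matching the absence of such edges in $G$. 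The only entry needing thought is the $(v,v)$ entry $\alpha_1+\alpha_2$, where $\alpha_i$ is the $(v,v)$ entry of $A_i$; its sign is always consistent with the loops at $v$ in $G$ — for instance $\alpha_1+\alpha_2>0$ forces some $\alpha_i>0$, hence an odd loop at $v$ in $G_i$ and therefore in $G$, and the negative and zero cases are analogous. Granting this observation, statement (ii) is immediate, since $A_1\oplus A_2\to A_1\oplus_1 A_2\in S(G,\Sigma)$ by Lemma~\ref{lem:1sepformulaonedirection}.

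For (i) I would also use that $S((G_i,\Sigma_i)-v)=S(G_i,\Sigma_i)(v)$: restricting a matrix of $S(G_i,\Sigma_i)$ to the principal submatrix on $V(G_i)\setminus\{v\}$ gives a matrix of $S((G_i,\Sigma_i)-v)$, and conversely any matrix of $S((G_i,\Sigma_i)-v)$ extends to one of $S(G_i,\Sigma_i)$ by inserting the $v$-row and $v$-column with $0$, $+1$ or $-1$ entries dictated by the edges and loops of $G_i$ at $v$. With $v$ placed last in $G_1$ and first in $G_2$, Lemma~\ref{lem:1sepformulaonedirectiondeleted} then gives $S((G_1,\Sigma_1)-v)\oplus S((G_2,\Sigma_2)-v)\oplus H \to S(G_1,\Sigma_1)\oplus_1 S(G_2,\Sigma_2)\subseteq S(G,\Sigma)$, which is (i).

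Statements (iii) and (iv) are where the real work lies, and they are symmetric under swapping the roles of $E$ and $O$, i.e.\ of $G_1$ and $G_2$, so I focus on (iii). Here $A_1\oplus_1 A_2$ need no longer lie in $S(G,\Sigma)$: the even loop added to $G_1$ and the odd loop added to $G_2$ permit $\alpha_1>0$ or $\alpha_2<0$ even when $G$ has no odd, respectively no even, loop at $v$. To repair this I would first rescale: for $s,t>0$, conjugating $A_1\oplus A_2$ by the diagonal matrix equal to $s$ in the $v$-coordinate of the $G_1$-block, $t$ in the $v$-coordinate of the $G_2$-block, and $1$ elsewhere, keeps the result in $S((G_1,\Sigma_1)_E)\oplus S((G_2,\Sigma_2)_O)$ since all entry signs are preserved, and Lemma~\ref{lem:1sepformulaonedirection} then sends it to a $1$-subdirect sum whose $(v,v)$ entry is $s^2\alpha_1+t^2\alpha_2$ and whose other entries keep their signs. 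It remains to choose $s,t>0$ making $s^2\alpha_1+t^2\alpha_2$ compatible with the loops at $v$ in $G$, and this is the one place requiring a short case analysis. The key point is: if $G$ has no odd loop at $v$ then neither does $G_1$, so the added even loop forces $\alpha_1<0$; symmetrically, no even loop at $v$ in $G$ forces $\alpha_2>0$. Hence if $G$ has no loop at $v$ we have $\alpha_1<0<\alpha_2$ and may pick $s,t>0$ with $s^2\alpha_1+t^2\alpha_2=0$; if $G$ has only an odd loop (resp.\ only an even loop) at $v$, then $\alpha_2>0$ (resp.\ $\alpha_1<0$) and taking the relevant scaling parameter large makes the entry positive (resp.\ negative); and if $G$ has both loops at $v$, every value is admissible, so $s=t=1$ works.

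In short, the main obstacle is this last $(v,v)$-entry bookkeeping in (iii) and (iv); once the vertex orderings are arranged so that $\oplus_1$ literally glues $G_1$ and $G_2$ along $v$, the other three cases are direct applications of Lemmas~\ref{lem:1sepformulaonedirection} and~\ref{lem:1sepformulaonedirectiondeleted}.
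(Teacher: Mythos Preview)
Your argument is correct and follows essentially the same route as the paper: parts (i) and (ii) come directly from Lemmas~\ref{lem:1sepformulaonedirection} and~\ref{lem:1sepformulaonedirectiondeleted} once one checks that $S(G_1,\Sigma_1)\oplus_1 S(G_2,\Sigma_2)\subseteq S(G,\Sigma)$, and (iii)/(iv) are handled by a positive rescaling followed by a case analysis on the $(v,v)$ entry. The only cosmetic differences are that the paper scales the entire matrix $C$ by a single positive factor $\alpha$ (so the $(v,v)$ entry becomes $\alpha c_{v,v}+d_{v,v}$) rather than scaling just the $v$-rows and columns by two parameters $s,t$, and it organizes the case split by the signs of $c_{v,v},d_{v,v}$ rather than by the loop structure of $G$ at $v$; your organization is arguably a bit cleaner, but the content is the same.
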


\begin{proof}
By Lemma~\ref{lem:1sepformulaonedirection},
\begin{equation*}
S(G_1,\Sigma_1)\oplus S(G_2,\Sigma_2)\oplus H\to S(G,\Sigma),
\end{equation*}
and by Lemma~\ref{lem:1sepformulaonedirectiondeleted},
\begin{equation*}
S((G_1,\Sigma_1)-v)\oplus S((G_2,\Sigma_2)-v)\oplus H\to S(G,\Sigma).
\end{equation*}

We now show that 
\begin{equation*}
S(G_1,\Sigma_1)_E\oplus S(G_2,\Sigma_2)_O\oplus H\to S(G,\Sigma).
\end{equation*}
Let $C = [c_{i,j}]\in S(G_1,\Sigma_1)_E$ and $D = [d_{i,j}]\in S(G_2,\Sigma_2)_O$. If $c_{v,v}d_{v,v} < 0$, then we can find a positive scalar $\alpha$ such that $A =[a_{i,j}]= \alpha C\oplus_1 D\in S(G,\Sigma)$. (To see this, notice that if $\alpha>0$ is sufficiently small, then $a_{v,v}$ has the same sign as $d_{v,v}$, while if $\alpha>0$ is sufficiently large, then $a_{v,v}$ has the same sign as $c_{v,v}$.) If $c_{v,v} > 0$ and $d_{v,v} > 0$, then $(G_1,\Sigma_1)$ has an odd loop at $v$, and so $(G,\Sigma)$ has an odd loop at $v$. Hence $C\oplus_1 D\in S(G,\Sigma)$. The case where $c_{v,v} < 0$ and $d_{v,v} < 0$ is similar. If $c_{v,v}=0$ and $d_{v,v} > 0$, then $(G_1,\Sigma_1)$ has an odd loop at $v$, and so $(G,\Sigma)$ has an odd loop at $v$. Hence $C\oplus_1 D\in S(G,\Sigma)$. If $c_{v,v} = 0$ and $d_{v,v} < 0$, then $(G_2,\Sigma_2)$ has an even loop at $v$, and so $(G,\Sigma)$ has an even loop. Hence $C\oplus_1 D\in S(G,\Sigma)$.
The cases where $d_{v,v}=0$ and either $c_{v,v} > 0$ or $c_{v,v}<0$ are similar. If $c_{v,v} = d_{v,v} = 0$, then $(S_1,\Sigma_1)$ has an odd loop at $v$ and $(S_2,\Sigma_2)$ has an even loop at $v$. Hence $(G,\Sigma)$ has an even and an odd loop at $v$, and so $C\oplus_1 D\in S(G,\Sigma)$. The proof that 
\begin{equation*}
S(G_1,\Sigma_1)_O\oplus S(G_2,\Sigma_2)_E\oplus H\to S(G,\Sigma)
\end{equation*}
is similar.
\end{proof}

\section{The inertia set of a signed graph with a 1-separation}\label{sec:minrank1sep}

In this section we finish the proof that Formula~(\ref{arrowformula}) is correct. This finishes the proof that Formula~(\ref{mainformula}) is correct. First we prove some lemmas.

\begin{lemma}\label{lem:vertexadd}
Let $n>0$ and let 
$\begin{bmatrix}
b_{1,1} & B_{1,2}\\
B_{2,1} & B_{2,2}
\end{bmatrix}$
be a symmetric $n\times n$ real matrix, where $b_{1,1}$ is a scalar. Let $a\not=0$. Then
\begin{equation*}
\begin{bmatrix}
0 & a & 0\\
a & b_{1,1} & B_{1,2}\\
0 & B_{2,1} & B_{2,2}
\end{bmatrix}\leftrightarrow H\oplus B_{2,2}.
\end{equation*}
\end{lemma}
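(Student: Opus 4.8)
The plan is to prove the lemma in one stroke by exhibiting a single \emph{invertible} real matrix $P$ for which $P^{T}(H\oplus B_{2,2})P$ equals the matrix $M$ on the left-hand side. Since $P$ is invertible, the identity $P^{T}(H\oplus B_{2,2})P = M$ gives $H\oplus B_{2,2}\to M$, and rewriting it as $(P^{-1})^{T}M\,P^{-1} = H\oplus B_{2,2}$ gives $M\to H\oplus B_{2,2}$; hence $M\leftrightarrow H\oplus B_{2,2}$, and there is no need to treat the two directions separately.

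To build $P$, view the coordinates of $M$ in three blocks: the first coordinate, which carries only the off-diagonal entry $a$; the second coordinate, which carries $b_{1,1}$ and the row $B_{1,2}$; and the remaining block of size $n-1$, on which $B_{2,2}$ sits. Starting from $H\oplus B_{2,2}$, I would use the off-diagonal pairing of the $H$-block as a pivot: add $\tfrac{1}{2}b_{1,1}$ times the first coordinate to the second (this manufactures the diagonal entry $b_{1,1}$), add $(B_{1,2})_{j}$ times the first coordinate to the $j$th coordinate of the $B_{2,2}$-block for each $j$ (this manufactures $B_{1,2}$ and $B_{2,1}$), and finally rescale the first coordinate by $a$ (turning the $1$ of $H$ into $a$). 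Concretely this amounts to taking $P=\begin{bmatrix} a & b_{1,1}/2 & B_{1,2}\\ 0 & 1 & 0\\ 0 & 0 & I_{n-1}\end{bmatrix}$; a direct block multiplication verifies $P^{T}(H\oplus B_{2,2})P = M$, and $\det P = a\neq 0$. Equivalently, one may run the same sequence of symmetric row/column operations on $M$ itself: pivot on the entry $a$ to clear $b_{1,1}$, then clear $B_{1,2}$, then rescale $a$ to $1$ — these are precisely the inverses of the moves above.

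The only real content — and the only place care is needed — is that the $B_{2,2}$ block must survive completely unchanged and that no side condition (such as $B_{2,1}$ lying in the column space of $B_{2,2}$) is required. This forces the clearing of $B_{1,2}$ to be carried out against the first coordinate rather than against the $B_{2,2}$ block; this is legitimate exactly because every entry of the first row of $M$ other than the $(1,2)$ entry is zero, so these operations do not touch $B_{2,2}$, and because $a\neq 0$ makes the first coordinate a usable pivot. That is where the hypothesis $a\neq 0$ is used. Everything else is the routine verification of the matrix product $P^{T}(H\oplus B_{2,2})P$, which I would write out but which presents no difficulty.
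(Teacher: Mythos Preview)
Your proof is correct and is essentially the same as the paper's: the paper exhibits an invertible matrix $Q$ with $Q^{T}MQ=H\oplus B_{2,2}$, whereas you exhibit an invertible $P$ with $P^{T}(H\oplus B_{2,2})P=M$, and in fact your $P$ is exactly the inverse of the paper's $Q$. The only cosmetic difference is the direction in which the congruence is written down.
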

\begin{proof}
Let
\begin{equation*}
P = \begin{bmatrix}
1/a & -b_{1,1}/(2a) & -(1/a)B_{1,2}\\
0 & 1 & 0\\
0 & 0 & I_{n-2}
\end{bmatrix}.
\end{equation*}
Then $P$ is invertible and 
\begin{equation*}
P^T \begin{bmatrix}
0 & a & 0\\
a & b_{1,1} & B_{1,2}\\
0 & B_{2,1} & B_{2,2}
\end{bmatrix} P = H\oplus B_{2,2},
\end{equation*}
hence the lemma follows.
\end{proof}

\begin{lemma}\label{lem:adjoin}
Let $\begin{bmatrix}
A & B\\
B^T & C
\end{bmatrix}$ be a symmetric $n\times n$ real matrix, where $A$ is a symmetric $k\times k$ real matrix. If $x\in \ker(C)$, then
\begin{equation*}
\begin{bmatrix}
0 & (Bx)^T & 0\\
Bx & A & B\\
0 & B^T & C
\end{bmatrix}\leftrightarrow
\begin{bmatrix}
A & B\\
B^T & C
\end{bmatrix}.
\end{equation*}
\end{lemma}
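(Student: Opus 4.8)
The plan is to prove the two arrows $\begin{bmatrix} 0 & (Bx)^T & 0 \\ Bx & A & B \\ 0 & B^T & C \end{bmatrix} \to \begin{bmatrix} A & B \\ B^T & C \end{bmatrix}$ and $\begin{bmatrix} A & B \\ B^T & C \end{bmatrix} \to \begin{bmatrix} 0 & (Bx)^T & 0 \\ Bx & A & B \\ 0 & B^T & C \end{bmatrix}$ separately, in each case by exhibiting an explicit matrix $P$ realizing the congruence. Write $M = \begin{bmatrix} A & B \\ B^T & C \end{bmatrix}$, an $n\times n$ matrix in which $C$ is $(n-k)\times(n-k)$, so that $x\in\mathbb{R}^{n-k}$ and $Bx\in\mathbb{R}^k$, and write $\widehat{M}$ for the $(n+1)\times(n+1)$ matrix displayed on the left-hand side of the statement.

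The arrow $\widehat{M}\to M$ is immediate: $M$ is the principal submatrix of $\widehat{M}$ obtained by deleting its first row and column, so with $P$ the $(n+1)\times n$ matrix $\begin{bmatrix} 0 \\ I_n \end{bmatrix}$ one has $P^T\widehat{M}P = M$. (This also follows from Lemma~\ref{lem:vertexdel} after a permutation moving the adjoined coordinate to the last position.)

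For the arrow $M\to\widehat{M}$, which is the substantive direction since a new coordinate must be created, set $y = \begin{bmatrix} 0 \\ x \end{bmatrix}\in\mathbb{R}^n$ and let $P = \begin{bmatrix} y & I_n \end{bmatrix}$, an $n\times(n+1)$ matrix. The crucial point is that since $x\in\ker(C)$ we have $My = \begin{bmatrix} Bx \\ Cx \end{bmatrix} = \begin{bmatrix} Bx \\ 0 \end{bmatrix}$, and hence also $y^TMy = 0$. Expanding $P^TMP$ in the two blocks given by the first column and the last $n$ columns of $P$ yields
\[
P^T M P = \begin{bmatrix} y^T M y & (My)^T \\ My & M \end{bmatrix} = \begin{bmatrix} 0 & (Bx)^T & 0 \\ Bx & A & B \\ 0 & B^T & C \end{bmatrix} = \widehat{M},
\]
which is the desired identity.

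The only real idea is the choice of the vector $y$ together with the observation that the hypothesis $x\in\ker(C)$ is exactly what forces both the top-left entry of $P^TMP$ to vanish and the last $n-k$ entries of its first row to vanish simultaneously; once $P$ is written down, the verification is a one-line block multiplication, so I do not expect any genuine obstacle.
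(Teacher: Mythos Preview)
Your proof is correct and essentially identical to the paper's: your matrix $P=\begin{bmatrix} y & I_n\end{bmatrix}$ with $y=\begin{bmatrix}0\\x\end{bmatrix}$ is, in block form, exactly the paper's $P=\begin{bmatrix}0 & I_k & 0\\ x & 0 & I_{n-k}\end{bmatrix}$, and for the reverse arrow the paper likewise appeals to Lemma~\ref{lem:vertexdel}.
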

\begin{proof}
Let
\begin{equation*}
P = \begin{bmatrix}
0 & I_k & 0\\
x & 0 & I_{n-k}
\end{bmatrix}.
\end{equation*}
Then
\begin{equation*}
P^T \begin{bmatrix}
A & B\\
B^T & C
\end{bmatrix} P = \begin{bmatrix}
0 & (Bx)^T & 0\\
Bx & A & B\\
0 & B^T & C
\end{bmatrix},
\end{equation*}
so
\begin{equation*}
\begin{bmatrix}
A & B\\
B^T & C
\end{bmatrix}\rightarrow
\begin{bmatrix}
0 & (Bx)^T & 0\\
Bx & A & B\\
0 & B^T & C
\end{bmatrix}.
\end{equation*}
The other direction follows from Lemma~\ref{lem:vertexdel}.
\end{proof}

\begin{lemma}\label{lem:alternative}
Let $A = \begin{bmatrix}
A_{1,1} & A_{1,2} & 0\\
A_{2,1} & A_{2,2} & A_{2,3}\\
0 & A_{3,2} & A_{3,3}
\end{bmatrix}$ be a symmetric $n\times n$ matrix, where $A_{1,1}$ is $k\times k$ and $A_{2,2}$ is $m\times m$. Then at least one of the following holds:
\begin{enumerate}[(i)]
\item there exists a $k\times m$ matrix $Y$ such that
\begin{equation*}
\begin{bmatrix}
A_{1,1} & A_{1,2}\\
A_{2,1} & Y^T A_{1,1}Y
\end{bmatrix}\oplus
\begin{bmatrix}
A_{2,2} - Y^T A_{1,1}Y & A_{2,3}\\
A_{3,2} & A_{3,3}
\end{bmatrix}\leftrightarrow A,
\end{equation*}
or
\item there exists a nonzero vector $z \in \mathbb{R}^m$ such that
\begin{equation*}
\begin{bmatrix}
0 & 0 & z^T & 0\\
0 & A_{1,1} & A_{1,2} & 0\\
z & A_{2,1} & A_{2,2} & A_{2,3}\\
0 & 0 & A_{3,2} & A_{3,3}
\end{bmatrix}\leftrightarrow A.
\end{equation*}
\end{enumerate}
\end{lemma}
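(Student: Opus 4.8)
The plan is a dichotomy according to whether the columns of $A_{1,2}$ lie in the column space of $A_{1,1}$. Before splitting, I would note that in each alternative the ``backward'' direction of the $\leftrightarrow$ is essentially free. In alternative~(i), forming the $m$-subdirect sum of the two displayed summands recombines the $(2,2)$-blocks $Y^TA_{1,1}Y$ and $A_{2,2}-Y^TA_{1,1}Y$ back into $A_{2,2}$, so that $m$-subdirect sum is literally $A$; since $A_{3,3}$ is $(n-k-m)\times(n-k-m)$ we have $m\le n-k$, and Lemma~\ref{lem:sumpin} then gives that the block-diagonal matrix arrows to $A$. In alternative~(ii), Lemma~\ref{lem:adjoin} will supply $\leftrightarrow$ outright. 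So the real content is to produce, in the first case below, a congruence \emph{from} $A$ to the block-diagonal matrix.

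\emph{Case 1: $A_{1,2}=A_{1,1}Y$ for some $k\times m$ matrix $Y$}, equivalently every column of $A_{1,2}$ lies in the column space of $A_{1,1}$. Here I would simply exhibit the $n\times(n+m)$ matrix
\[
P=\begin{bmatrix} I_k & Y & -Y & 0\\ 0 & 0 & I_m & 0\\ 0 & 0 & 0 & I_{n-k-m}\end{bmatrix}
\]
and check that $P^TAP$ equals the block-diagonal matrix of~(i). The only algebra used is that the hypothesis gives $A_{1,1}Y=A_{1,2}$, hence also $Y^TA_{1,1}Y=Y^TA_{1,2}=A_{2,1}Y$; these identities are exactly what makes the coupling between the two size-$m$ column blocks of $P$ vanish and what produces $Y^TA_{1,1}Y$ in the first diagonal block and $A_{2,2}-Y^TA_{1,1}Y$ in the second. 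Combined with the automatic backward arrow, this yields~(i).

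\emph{Case 2: no such $Y$ exists.} Since $A_{1,1}$ is symmetric, its column space is $\ker(A_{1,1})^{\perp}$, so the failure of Case~1 produces $w\in\ker(A_{1,1})$ with $w^TA_{1,2}\neq 0$; put $z:=A_{2,1}w=(w^TA_{1,2})^T$, a nonzero vector in $\mathbb{R}^m$. Reorder the three index blocks of $A$ in the order $(2,1,3)$; the result is a permutation-congruent symmetric matrix whose top-left block is $A_{2,2}$, whose bottom-right block is $A_{1,1}\oplus A_{3,3}$, and whose off-diagonal block is $\begin{bmatrix}A_{2,1}&A_{2,3}\end{bmatrix}$. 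Apply Lemma~\ref{lem:adjoin} to this matrix with $x=\begin{bmatrix}w\\0\end{bmatrix}$: it lies in $\ker(A_{1,1}\oplus A_{3,3})$ and $\begin{bmatrix}A_{2,1}&A_{2,3}\end{bmatrix}\begin{bmatrix}w\\0\end{bmatrix}=z$, so the lemma adjoins one new coordinate coupled only to the $A_{2,2}$-block and through the vector $z$. Permuting the blocks back to the order $(\text{new},1,2,3)$ turns the resulting $(n+1)\times(n+1)$ matrix into exactly the matrix of~(ii), and permutation congruences preserve $\leftrightarrow$.

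I do not expect a deep obstacle: the argument is two explicit congruences glued onto Lemmas~\ref{lem:sumpin} and~\ref{lem:adjoin}. The points requiring care are all bookkeeping: tracking the two block permutations in Case~2 so that $z$ ends up as the $z^T$ block of~(ii) and nowhere else; checking $x=(w,0)$ really lies in the kernel demanded by Lemma~\ref{lem:adjoin}; and confirming that the dichotomy is exhaustive, i.e.\ that a column of $A_{1,2}$ outside the column space of $A_{1,1}$ genuinely yields a usable $w$ with $A_{2,1}w\neq 0$ — which is immediate from symmetry of $A_{1,1}$.
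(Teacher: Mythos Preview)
Your proof is correct and follows essentially the same route as the paper's: the same congruence matrix $P$ in Case~1 and the same appeal to Lemma~\ref{lem:adjoin} (after a block permutation) in Case~2. The only difference is that the paper phrases the dichotomy as ``$[A_{2,1}\ A_{2,3}]$ annihilates $\ker(A_{1,1}\oplus A_{3,3})$ or not'' and then extracts the existence of $Y$ from that, whereas you go straight to ``$A_{1,2}$ lies in the column space of $A_{1,1}$ or not''; your formulation is a slight streamlining, since the $A_{3,3}$ part of the paper's hypothesis is never actually used.
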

\begin{proof}
If $\begin{bmatrix}
A_{2,1} & A_{2,3}
\end{bmatrix}x = 0$ 
for each $x \in \ker(\begin{bmatrix}
                         A_{1,1} & 0\\
			 0 & A_{3,3}
                        \end{bmatrix})$, 
then there exists a matrix $m\times (n-m)$ matrix $W$ such that 
\begin{equation*}
W \begin{bmatrix}
A_{1,1} & 0\\
0 & A_{3,3}
\end{bmatrix} = \begin{bmatrix}
A_{2,1} & A_{2,3}
\end{bmatrix},
\end{equation*}
and so there exists an $k\times m$ matrix $Y$ such that $A_{1,1} Y = A_{1,2}$. Let
\begin{equation*}
P = \begin{bmatrix}
I_k & Y & -Y & 0\\
0 & 0 & I_m & 0\\
0 & 0 & 0 & I_{n-k-m}
\end{bmatrix}.
\end{equation*}
Then
\begin{equation*}
P^T A P = \begin{bmatrix}
A_{1,1} & A_{1,2}\\
A_{2,1} & Y^T A_{1,1}Y
\end{bmatrix}\oplus
\begin{bmatrix}
A_{2,2} - Y^T A_{1,1}Y & A_{2,3}\\
A_{3,2} & A_{3,3}
\end{bmatrix}.
\end{equation*}
From this and Lemma~\ref{lem:sumpin}, it follows that
\begin{equation*}
A\leftrightarrow \begin{bmatrix}
A_{1,1} & A_{1,2}\\
A_{2,1} & Y^T A_{1,1} Y
\end{bmatrix}\oplus
\begin{bmatrix}
A_{2,2} - Y^T A_{1,1}Y & A_{2,3}\\
A_{3,2} & A_{3,3}
\end{bmatrix}.
\end{equation*}

Thus we may assume that there exists a vector $u \in \ker(\begin{bmatrix}
                         A_{1,1} & 0\\
			 0 & A_{3,3}
                        \end{bmatrix})$ with 
\begin{equation*}
z := \begin{bmatrix}
      A_{2,1} & A_{2,3}
     \end{bmatrix}u
\end{equation*} 
nonzero. By Lemma~\ref{lem:adjoin}, 
\begin{equation*}
\begin{bmatrix}
0 & 0 & z^T & 0\\
0 & A_{1,1} & A_{1,2} & 0\\
z & A_{2,1} & A_{2,2} & A_{2,3}\\
0 & 0 & A_{3,2} & A_{3,3}
\end{bmatrix}\leftrightarrow A,
\end{equation*}
which finishes the proof.
\end{proof}

\begin{thm}\label{thm:general1sum}
Let $A = \begin{bmatrix}
A_{1,1} & A_{1,2} & 0\\
A_{2,1} & a_{2,2} & A_{2,3}\\
0 & A_{3,2} & A_{3,3}
\end{bmatrix}$
be a symmetric $n\times n$ real matrix, where $A_{1,1}$ is $k\times k$, $A_{3,3}$ is $(n-k-1)\times (n-k-1)$, and $a_{2,2}$ is a scalar. Then at least one of the following holds:
\begin{enumerate}[(i)]
\item there exists an $x\in \reals^k$ such that
\begin{equation*}
\begin{bmatrix}
A_{1,1} & A_{1,2}\\
A_{2,1} & x^T A_{1,1}x
\end{bmatrix}\oplus
\begin{bmatrix}
a_{2,2} - x^T A_{1,1}x & A_{2,3}\\
A_{3,2} & A_{3,3}
\end{bmatrix}\leftrightarrow A, 
\end{equation*}
or
\item $A_{1,1}\oplus A_{3,3}\oplus H\leftrightarrow  A$.
\end{enumerate}
\end{thm}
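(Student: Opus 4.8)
The plan is to obtain the theorem as the $m=1$ instance of Lemma~\ref{lem:alternative}. Apply that lemma with its middle block $A_{2,2}$ taken to be the $1\times 1$ block $a_{2,2}$. When $m=1$, a $k\times m$ matrix $Y$ is simply a column vector $x\in\reals^k$, and $Y^TA_{1,1}Y=x^TA_{1,1}x$ is a scalar, so alternative~(i) of Lemma~\ref{lem:alternative} is exactly alternative~(i) of the theorem. Thus nothing further is needed when Lemma~\ref{lem:alternative}(i) holds.

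Suppose instead that alternative~(ii) of Lemma~\ref{lem:alternative} holds. Because $m=1$, the vector $z$ is a nonzero scalar, which I call $a$, and
\begin{equation*}
M:=\begin{bmatrix}
0 & 0 & a & 0\\
0 & A_{1,1} & A_{1,2} & 0\\
a & A_{2,1} & a_{2,2} & A_{2,3}\\
0 & 0 & A_{3,2} & A_{3,3}
\end{bmatrix}\leftrightarrow A.
\end{equation*}
First I would apply the permutation congruence that reorders the coordinates of $M$ so that the row/column carrying $a_{2,2}$ comes immediately after the new (first) row/column; since permutation matrices are invertible this gives $M\leftrightarrow M'$, where
\begin{equation*}
M':=\begin{bmatrix}
0 & a & 0 & 0\\
a & a_{2,2} & A_{2,1} & A_{2,3}\\
0 & A_{1,2} & A_{1,1} & 0\\
0 & A_{3,2} & 0 & A_{3,3}
\end{bmatrix}.
\end{equation*}
Now I would invoke Lemma~\ref{lem:vertexadd} with $b_{1,1}=a_{2,2}$, $B_{1,2}=\begin{bmatrix}A_{2,1} & A_{2,3}\end{bmatrix}$, and $B_{2,2}=A_{1,1}\oplus A_{3,3}$, which yields $M'\leftrightarrow H\oplus A_{1,1}\oplus A_{3,3}$. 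Composing $A\leftrightarrow M\leftrightarrow M'\leftrightarrow H\oplus A_{1,1}\oplus A_{3,3}$ and a final permutation congruence reordering the direct summands gives $A_{1,1}\oplus A_{3,3}\oplus H\leftrightarrow A$, which is alternative~(ii) of the theorem.

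Since the entire substantive argument is already contained in Lemma~\ref{lem:alternative}, I do not anticipate a genuine obstacle here; the theorem is essentially its $m=1$ corollary. The only points that call for a little care are the two permutation congruences above and the degenerate cases $k=0$ or $n-k-1=0$, in which $A_{1,1}$ or $A_{3,3}$ is an empty block. In those cases the relevant submatrices are $0\times n$ or $n\times 0$, Lemma~\ref{lem:vertexadd} still applies because its inner matrix always contains the entry $a_{2,2}$ and so is nonempty, and the conventions on empty matrices set up in the introduction make every step go through unchanged.
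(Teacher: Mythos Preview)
Your proposal is correct and follows essentially the same route as the paper: specialize Lemma~\ref{lem:alternative} to $m=1$, and in alternative~(ii) reduce via Lemma~\ref{lem:vertexadd}. The only difference is that you make explicit the permutation congruence needed before invoking Lemma~\ref{lem:vertexadd}, whereas the paper applies that lemma directly to the matrix with $z$ in the $(1,k{+}2)$ position and leaves the reordering implicit.
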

\begin{proof}
By  Lemma~\ref{lem:alternative} there exists a vector $x\in \reals^k$ such that
\begin{equation*}
A\leftrightarrow \begin{bmatrix}
A_{1,1} & A_{1,2}\\
A_{2,1} & x^T A_{1,1}x
\end{bmatrix}\oplus
\begin{bmatrix}
a_{2,2} - x^T A_{1,1}x & A_{2,3}\\
A_{3,2} & A_{3,3}
\end{bmatrix},
\end{equation*}
or there exists a nonzero scalar $z$ such that
\begin{equation*}
A\leftrightarrow\begin{bmatrix}
0 & 0 & z & 0\\
0 & A_{1,1} & A_{1,2} & 0\\
z & A_{2,1} & a_{2,2} & A_{2,3}\\
0 & 0 & A_{3,2} & A_{3,3}
\end{bmatrix}.
\end{equation*}
By Lemma~\ref{lem:vertexadd},
\begin{equation*}
\begin{bmatrix}
0 & 0 & z & 0\\
0 & A_{1,1} & A_{1,2} & 0\\
z & A_{2,1} & a_{2,2} & A_{2,3}\\
0 & 0 & A_{3,2} & A_{3,3}
\end{bmatrix}\leftrightarrow H\oplus A_{1,1}\oplus A_{3,3}.
\end{equation*}
\end{proof}

\begin{lemma}\label{lem:split1sep}
Let $[(G_1,\Sigma_1),(G_2,\Sigma_2)]$ be a $1$-separation of the signed graph $(G,\Sigma)$, and let $\{v\} = V(G_1)\cap V(G_2)$. If $A\in S(G,\Sigma)$, then there exist matrices $B = [b_{i,j}]\in S(G_1,\Sigma_1)$ and $C=[c_{i,j}]\in S(G_2,\Sigma_2)$ such that $A = B\oplus_1 C$.
\end{lemma}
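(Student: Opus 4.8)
The plan is to pick a convenient vertex ordering and then reduce everything to a one-dimensional statement about the diagonal entry at $v$. First I would order the vertices of $G$ so that those of $V(G_1)\setminus\{v\}$ come first, then $v$, then those of $V(G_2)\setminus\{v\}$, and write $A$ in the corresponding $3\times 3$ block form
\[
A = \begin{bmatrix}
A_{1,1} & A_{1,2} & A_{1,3}\\
A_{2,1} & a_{2,2} & A_{2,3}\\
A_{3,1} & A_{3,2} & A_{3,3}
\end{bmatrix},
\]
where the middle block row and column correspond to $v$ and $a_{2,2}$ is a scalar. The first observation is that $A_{1,3}=0$: an entry of $A_{1,3}$ corresponds to a pair $\{i,j\}$ with $i\in V(G_1)\setminus\{v\}$ and $j\in V(G_2)\setminus\{v\}$, and since $G=G_1\cup G_2$ with $E(G_1)\cap E(G_2)=\emptyset$, an edge between $i$ and $j$ would lie in $G_1$ or in $G_2$ and hence force both endpoints into $V(G_1)$ or both into $V(G_2)$, which is impossible; so there is no edge between $i$ and $j$, and the definition of $S(G,\Sigma)$ gives $a_{i,j}=0$.

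Next I would split $a_{2,2}=\beta+\gamma$ and set
\[
B := \begin{bmatrix} A_{1,1} & A_{1,2}\\ A_{2,1} & \beta\end{bmatrix},
\qquad
C := \begin{bmatrix} \gamma & A_{2,3}\\ A_{3,2} & A_{3,3}\end{bmatrix}.
\]
Granting $A_{1,3}=0$, the identity $B\oplus_1 C=A$ is immediate from the definition of the $1$-subdirect sum. All entries of $B$ other than $\beta$ are entries of $A$ indexed by vertices of $G_1$, and every edge of $G$ (and every odd edge of $G$) joining two such vertices already lies in $G_1$ — the only edges of $G$ that can sit in $G_2$ rather than $G_1$ are loops at $v$ — so the sign constraints that $A$ satisfies for $(G,\Sigma)$ are exactly the ones $B$ needs for $(G_1,\Sigma_1)$; the same applies to $C$ and $(G_2,\Sigma_2)$. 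Thus the entire lemma reduces to showing that $a_{2,2}$ can be written as $\beta+\gamma$ with $\beta$ an admissible diagonal value at $v$ for $(G_1,\Sigma_1)$ and $\gamma$ an admissible diagonal value at $v$ for $(G_2,\Sigma_2)$.

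To finish I would record the set $D(H,v)$ of admissible diagonal values at $v$ for a signed graph $(H,\Sigma_H)$: it equals $\{0\}$ when there is no loop at $v$, $\{t : t>0\}$ when there is an odd loop but no even loop at $v$, $\{t : t<0\}$ when there is an even loop but no odd loop, and all of $\reals$ when loops of both parities are present. Because each loop of $G$ at $v$ lies in exactly one of $G_1,G_2$, the graph $G$ has an odd (respectively even) loop at $v$ if and only if $G_1$ or $G_2$ does; a short case analysis on which loop parities occur at $v$ in $G$ and on which of $G_1,G_2$ carries them then yields $D(G,v)\subseteq D(G_1,v)+D(G_2,v)$ (Minkowski sum). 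Since $A\in S(G,\Sigma)$ gives $a_{2,2}\in D(G,v)$, the required $\beta$ and $\gamma$ exist. The main obstacle is exactly this last case analysis: the delicate situation is when $G$ has loops of both parities at $v$ but one of $G_1,G_2$ carries a loop of only one parity (or no loop at all), in which case one must choose $\beta$ and $\gamma$ both nonzero with the correct signs rather than the naive split that assigns $0$ to one side.
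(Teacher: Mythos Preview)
Your proof is correct and follows essentially the same approach as the paper's: both reduce immediately to splitting the single diagonal entry $a_{v,v}$ and then case-analyze the loop parities at $v$ in $G_1$ and $G_2$. Your Minkowski-sum packaging $D(G,v)\subseteq D(G_1,v)+D(G_2,v)$ is a clean way to organize the same cases, and you are actually more explicit than the paper about why $A_{1,3}=0$ and why the off-diagonal entries automatically satisfy the required sign constraints.
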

\begin{proof}
Except for the entries $b_{v,v}$ and $c_{v,v}$, all other entries of $B$ and $C$ are determined by $A$.

Suppose $a_{v,v} > 0$. Then $(G,\Sigma)$ has an odd edge at vertex $k$. Hence $(G_1,\Sigma_1)$ or $(G_2,\Sigma_2)$ has an odd edge at vertex $v$; by symmetry, we may assume that $(G_1,\Sigma_1)$ has an odd edge at $v$. If $(G_2,\Sigma_2)$ has an odd edge at $v$, let $b_{v,v} = c_{v,v} = a_{v,v}/2$. Otherwise, $c_{v,v}\leq 0$ always. Then, we let $b_{v,v} = a_{v,v} - c_{v,v}$.
The case where $a_{v,v} < 0$ is similar. 

Suppose now that $a_{v,v} = 0$. If $(G,\Sigma)$ has no loops at $v$, then both $(G_1,\Sigma_1)$ and $(G_2,\Sigma_2)$ have no loops at $v$. Then $b_{v,v} = c_{v,v} = 0$. We now assume that $(G,\Sigma)$ has loops at $v$. Then there is at least one even and at least one odd loop at $v$. If $(G_1,\Sigma_1)$ has no even loops at $v$, then $(G_2,\Sigma_2)$ has an even loop at $v$. If $(G_1,\Sigma_1)$ has an odd loop at $v$, then we let $b_{v,v} = 1$ and $c_{v,v} = -1$. If $(G_1,\Sigma_1)$ has no odd loops at $v$, then $(G_2,\Sigma_2)$ has an odd loop at $v$. Then we let $b_{v,v} = c_{v,v} = 0$. The case where $(G_1,\Sigma_1)$ has no odd loops at $v$ is similar. So we may assume that $(G_1,\Sigma_1)$ and, by symmetry also $(G_2,\Sigma_2)$, have an even and an odd loop. Then we let $b_{v,v} = c_{v,v} = 0$.
\end{proof}

\begin{thm}
Let $(G,\Sigma)$ be a signed graph which has a $1$-separation $[(G_1,\Sigma_1),(G_2,\Sigma_2)]$. Let $v$ be the vertex in $V(G_1)\cap V(G_2)$. Then
\begin{equation}
\begin{split}
S(G,\Sigma)\leftrightarrow & [S((G_1,\Sigma_1)-v)\oplus S((G_2,\Sigma_2)-v)\oplus H]\\
 & \cup [S(G_1,\Sigma_1) \oplus S(G_2,\Sigma_2)]\\
& \cup [S(G_1,\Sigma_1)_E \oplus S(G_2,\Sigma_2)_O]\\
& \cup [S(G_1,\Sigma_1)_O \oplus S(G_2,\Sigma_2)_E].
\end{split}
\end{equation}
\end{thm}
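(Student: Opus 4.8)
The plan is to prove the two directions of $\leftrightarrow$ separately. The ``$\leftarrow$'' direction (that each of the four sets on the right arrows into $S(G,\Sigma)$) is already established by the lemma at the end of Section~2 together with Lemma~\ref{lem:vertexdel} (to absorb the extra $H$ summand when needed). So the real work is the ``$\to$'' direction: given an arbitrary $A\in S(G,\Sigma)$, I must show $A$ arrows into one of the four sets. First I would invoke Lemma~\ref{lem:split1sep} to write $A = B\oplus_1 C$ with $B=[b_{i,j}]\in S(G_1,\Sigma_1)$ and $C=[c_{i,j}]\in S(G_2,\Sigma_2)$. Reindexing so that $v$ is the first vertex of $G_1$'s block and writing $B=\begin{bmatrix} b_{v,v} & B_{1,2}\\ B_{2,1} & B_{2,2}\end{bmatrix}$ and similarly for $C$, the matrix $A$ has exactly the block form
\begin{equation*}
A = \begin{bmatrix}
B_{2,2} & B_{2,1} & 0\\
B_{1,2} & b_{v,v}+c_{v,v} & C_{1,2}\\
0 & C_{2,1} & C_{2,2}
\end{bmatrix}
\end{equation*}
to which Theorem~\ref{thm:general1sum} applies with $A_{1,1}=B_{2,2}$, $a_{2,2}=b_{v,v}+c_{v,v}$, $A_{3,3}=C_{2,2}$.

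By Theorem~\ref{thm:general1sum} one of two things happens. In case (ii), $A\leftrightarrow B_{2,2}\oplus C_{2,2}\oplus H$; since $B_{2,2} = B(1) \in S((G_1,\Sigma_1)-v)$ and $C_{2,2}= C(1) \in S((G_2,\Sigma_2)-v)$, this puts $A$ into the first set on the right-hand side, and we are done. In case (i), there is an $x\in\reals^k$ with
\begin{equation*}
A\leftrightarrow \begin{bmatrix}
B_{2,2} & B_{2,1}\\
B_{1,2} & x^T B_{2,2}x
\end{bmatrix}\oplus
\begin{bmatrix}
b_{v,v}+c_{v,v} - x^T B_{2,2}x & C_{1,2}\\
C_{2,1} & C_{2,2}
\end{bmatrix}.
\end{equation*}
The first summand is a matrix whose off-diagonal structure agrees with $B$ and whose $(v,v)$-entry is $x^T B_{2,2}x$; the second summand likewise agrees with $C$ off the diagonal, with $(v,v)$-entry $b_{v,v}+c_{v,v}-x^T B_{2,2}x$. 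Writing $t := x^T B_{2,2}x$, I have produced $B' \oplus C'$ where $B'$ has the sign pattern of $B$ except possibly at $(v,v)$ (entry $t$) and $C'$ has the sign pattern of $C$ except possibly at $(v,v)$ (entry $b_{v,v}+c_{v,v}-t$). The task is to show $B'\oplus C'$ arrows into one of the four sets.

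The main obstacle — and the heart of the argument — is the loop bookkeeping at $v$: I must show that for a suitable choice among available moves, $(B',C')$ can be taken in one of the four allowed pairs. The key observation is that $B'\oplus C' \to \alpha B' \oplus_1 (1/\alpha)C' \leftrightarrow B'\oplus_1 C'$-type scalings are available, and more usefully that by Lemma~\ref{lem:vertexdel} I may further arrow $B'$ down (dropping its $(v,v)$ entry and tacking on $H$), and symmetrically for $C'$. So I would split into cases on the signs of $t$ and $s := b_{v,v}+c_{v,v}-t$. If $t<0$ then $G_1$ has an even loop at $v$ so $B'\in S(G_1,\Sigma_1)_E$, and if simultaneously $s>0$ then $C'\in S(G_2,\Sigma_2)_O$, landing in the third set; the symmetric subcase lands in the fourth. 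If $t$ and $s$ have the same strict sign, say both positive, then since $t+s = b_{v,v}+c_{v,v}$ this forces $b_{v,v}>0$ or $c_{v,v}>0$, hence $G_1$ or $G_2$ has an odd loop at $v$; combined with the sign of $t$ (or $s$) at the other vertex one checks $B'\in S(G_1,\Sigma_1)$ and $C'\in S(G_2,\Sigma_2)$ (possibly after replacing a zero loop-entry, which is always allowed), landing in the second set. The degenerate cases where $t=0$ or $s=0$ are handled by the same reasoning as in Lemma~\ref{lem:split1sep} and the Section~2 lemma: a zero $(v,v)$-entry is permissible whenever there is no loop, or when there are both loops, at $v$ in the relevant piece, and otherwise one adjusts using the freedom in $x$ (replacing $x$ by $cx$ rescales $t$ by $c^2$, so $t$ ranges over all of $[0,\infty)$ or all of $(-\infty,0]$ according to the sign of $x^T B_{2,2}x$, and can be made to match whatever loop is present). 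Carrying out this finite case analysis, together with Lemma~\ref{lem:vertexdel} to supply the extra $H$ in the first set, completes the ``$\to$'' direction and hence the theorem.
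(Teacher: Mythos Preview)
Your overall architecture is exactly the paper's: invoke Theorem~\ref{thm:general1sum}, handle case~(ii) via the first set, and in case~(i) use Lemma~\ref{lem:split1sep} to introduce the decomposition $b_{v,v}+c_{v,v}$ of the middle diagonal entry. The gap is in how you run the case analysis in case~(i).

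You split on the signs of $t=x^TB_{2,2}x$ and $s=b_{v,v}+c_{v,v}-t$. When $t$ and $s$ have the same strict sign (say both positive) you claim $B'\in S(G_1,\Sigma_1)$ and $C'\in S(G_2,\Sigma_2)$. But a positive $(v,v)$-entry forces an odd loop at $v$, and from $t+s>0$ you only get that \emph{one} of $b_{v,v},c_{v,v}$ is positive, hence only one of $G_1,G_2$ need have an odd loop at $v$; the other summand may fail to lie in the plain class. (It can in fact be placed in the $_E$ or $_O$ class instead, but that is not what you wrote, and sorting this out reintroduces several subcases.) More seriously, for the degenerate cases you invoke ``freedom in $x$'', claiming that rescaling $x\mapsto cx$ lets you move $t$. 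But in case~(i) the vector $x$ is constrained by $B_{2,2}x=B_{2,1}$ (see the proof of Lemma~\ref{lem:alternative}), and for any two solutions $x,x'$ one has $x^TB_{2,2}x=x^TB_{2,1}=(x')^TB_{2,1}=(x')^TB_{2,2}x'$; the value $t$ is uniquely determined, so this freedom does not exist.

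The clean fix---and what the paper does---is to compare $t$ not with $0$ but with $b_{v,v}$. Write the first summand's $(v,v)$-entry as $b_{v,v}+(t-b_{v,v})$ and the second's as $c_{v,v}-(t-b_{v,v})$. If $t-b_{v,v}>0$, the first summand is a matrix of $S(G_1,\Sigma_1)$ with a positive number added at $(v,v)$, hence lies in $S(G_1,\Sigma_1)_O$, while the second is a matrix of $S(G_2,\Sigma_2)$ with the same positive number subtracted at $(v,v)$, hence lies in $S(G_2,\Sigma_2)_E$. If $t-b_{v,v}=0$ the two summands are exactly $B$ and $C$, landing in $S(G_1,\Sigma_1)\oplus S(G_2,\Sigma_2)$. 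The case $t-b_{v,v}<0$ is symmetric. This three-way split on $\operatorname{sign}(t-b_{v,v})$ disposes of case~(i) without any further subcases and without needing to perturb $x$.
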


\begin{proof}
By the previous section,
\begin{equation*}
\begin{split}
[S((G_1,\Sigma_1)-v)\oplus S((G_2,\Sigma_2)-v)\oplus H]\cup &\\
[S(G_1,\Sigma_1)\oplus S(G_2,\Sigma_2)]\cup &\\
[S(G_1,\Sigma_1)_E\oplus S(G_2,\Sigma_2)_O]\cup &\\
[S(G_1,\Sigma_1)_O\oplus S(G_2,\Sigma_2)_E] & \to S(G,\Sigma).
\end{split}
\end{equation*}
We now show that the converse direction also holds.

Let
\begin{equation*}
C = \begin{bmatrix}
C_{1,1} & C_{1,2} & 0\\
C_{2,1} & c_{2,2} & C_{2,3}\\
0 & C_{3,2} & C_{3,3}
\end{bmatrix}\in S(G,\Sigma).
\end{equation*}
Then, by Theorem~\ref{thm:general1sum}, at least one of the following holds:
\begin{enumerate}[(i)]
\item\label{item4} $C_{1,1} \oplus C_{3,3} \oplus H\leftrightarrow C$.
\item\label{item1} There exists a vector $x$ such that
\begin{equation*}
\begin{bmatrix}
C_{1,1} & C_{1,2}\\
C_{2,1} & x^T C_{1,1} x
\end{bmatrix}\oplus
\begin{bmatrix}
c_{2,2} - x^T C_{1,1} x & C_{2,3}\\
C_{3,2} & C_{3,3}
\end{bmatrix}\leftrightarrow C.
\end{equation*}
\end{enumerate}

Suppose first that $(\ref{item4})$ holds. Then
\begin{equation*}
C\to S((G_1,\Sigma_1)-v)\oplus S((G_2,\Sigma)-v)\oplus H.
\end{equation*}

Suppose now that $(\ref{item1})$ holds. By Lemma~\ref{lem:split1sep}, there exist matrices 
\begin{equation*}
B = \begin{bmatrix}
C_{1,1} & C_{1,2}\\
C_{2,1} & b
\end{bmatrix}\in S(G_1,\Sigma_1)\text{ and }
D = \begin{bmatrix}
d & C_{2,3}\\
C_{3,2} & C_{3,3}
\end{bmatrix}\in S(G_2,\Sigma_2)
\end{equation*}
such that $C = B\oplus_1 D$. (So $b+d=c_{2,2}$.)
If $x^T C_{1,1} x - b > 0$, then 
\begin{equation*}
\begin{bmatrix}
C_{1,1} & C_{1,2}\\
C_{2,1} & x^T C_{1,1} x
\end{bmatrix} = \begin{bmatrix}
C_{1,1} & C_{1,2}\\
C_{2,1} & b + (x^T C_{1,1} x - b)
\end{bmatrix}\in S(G_1,\Sigma_1)_O
\end{equation*}
and
\begin{equation*}
\begin{bmatrix}
c_{2,2} - x^T C_{1,1} x  & C_{2,3}\\
C_{3,2} & C_{3,3}
\end{bmatrix} = \begin{bmatrix}
d - (x^T C_{1,1} x - b) & C_{2,3}\\
C_{3,2} & C_{3,3}
\end{bmatrix}\in S(G_2,\Sigma_2)_E.
\end{equation*}
Hence $C\to S(G_1,\Sigma_1)_O\oplus S(G_2,\Sigma_2)_E$.
The cases where $x^T C_{1,1} x - b = 0$ and $x^T C_{1,1} x - b < 0$ are similar.

\end{proof}

\section{All terms are needed}
We now exhibit several examples of signed graphs illustrating that each term in Formula~(\ref{mainformula}) is needed.

To see that the term $\I((G_1,\Sigma_1)-v)+\I(G_2,\Sigma_2)-v)+\{(1,1)\}$ is needed in Formula~(\ref{mainformula}), let $(G,\Sigma)$ be the signed graph where $G$ is a $2$-path, all edges are odd, and none of the vertices has a loop. Let $(G_1,\Sigma_1)$ be the signed subgraph of $(G,\Sigma)$ consisting of one odd edge $e$, and let $(G_2,\Sigma_2)$ be the signed subgraph of $(G,\Sigma)$ consisting of the other odd edge. Then Formula~(\ref{mainformula}) shows that $(1,1)\in \I((G_1,\Sigma_1)-v)+\I((G_2,\Sigma_2)-v)+\{(1,1)\}$, while 
\begin{equation*}
\begin{split}
(0,0), (1,0), (0,1), (1,1)\not\in & [\I(G_1,\Sigma_1) + \I(G_2,\Sigma_2)] \\
\cup & [\I(G_1,\Sigma_1)_E +\I(G_2,\Sigma_2)_O]\\
\cup & [\I(G_1,\Sigma_1)_O +\I(G_2,\Sigma_2)_E].
\end{split}
\end{equation*}

To see that the term $\I(G_1,\Sigma_1)+\I(G_2,\Sigma_2)$ 
is needed in Formula~(\ref{mainformula}), let $(G,\Sigma)$ be the signed graph consisting of three isolated vertices and no edges. Let $(G_1,\Sigma_1)$ and $(G_2,\Sigma_2)$ be distinct signed subgraphs, each consisting of two vertices. Then $(0,0) \in \I(G_1,\Sigma_1)+\I(G_2,\Sigma_2)$, while 
\begin{equation*}
\begin{split}
(0,0)\not\in & [\I((G_1,\Sigma_1)-v) + \I((G_2,\Sigma_2)-v) + \{(1,1)\}] \\
\cup & [\I(G_1,\Sigma_1)_E +\I(G_2,\Sigma_2)_O]\\
\cup & [\I(G_1,\Sigma_1)_O +\I(G_2,\Sigma_2)_E].
\end{split}
\end{equation*}

To see that the term $\I(G_1,\Sigma_1)_E+\I(G_2,\Sigma_2)_O$ 
is needed in Formula~(\ref{mainformula}), let $(G,\Sigma)$ be the signed graph that is a $2$-path, all edges of which are odd, and with at each vertex an odd loop. Let $(G_1,\Sigma_1)$ be a signed subgraph consisting of one odd edge connecting distinct vertices and the two odd loops at these vertices. Let $(G_2,\Sigma_2)$ be the signed subgraph consisting of the remaining edges and the ends these edges.
Then $(2,0)\in \I(G_1,\Sigma_1)_E+\I(G_2,\Sigma_2)_O$, while 
\begin{equation*}
\begin{split}
(0,0), (1,0), (2,0)\not\in & [\I((G_1,\Sigma_1)-v) + \I((G_2,\Sigma_2)-v) + \{(1,1)\}]\\
\cup & [\I(G_1,\Sigma_1) + \I(G_2,\Sigma_2)]\\
\cup & [\I(G_1,\Sigma_1)_O + \I(G_2,\Sigma_2)_E].
\end{split}
\end{equation*}
Switching the roles of odd and even shows that the term $\I(G_1,\Sigma_1)_O+\I(G_2,\Sigma_2)_E$ is needed in Formula~(\ref{mainformula}).

 \newcommand{\noopsort}[1]{}

\end{document}